\pdfoutput=1
\documentclass[11pt]{amsart}
\usepackage{amsmath,amssymb,amsthm}
\usepackage[margin=1.2in]{geometry}
\usepackage{booktabs}
\usepackage{adjustbox}
\usepackage{xcolor}
\usepackage[colorlinks=true,linkcolor=blue,citecolor=blue,urlcolor=blue]{hyperref}
\hypersetup{pdfauthor={Bernd Johannes Wuebben},
  pdftitle={Doubly isolated Batyrev mirror pairs and non-smoothable Calabi-Yau threefolds}}

\newtheorem{theorem}{Theorem}[section]
\newtheorem{proposition}[theorem]{Proposition}
\newtheorem{lemma}[theorem]{Lemma}
\newtheorem{corollary}[theorem]{Corollary}

\theoremstyle{definition}

\newtheorem{example}[theorem]{Example}
\newtheorem{question}[theorem]{Question}
\theoremstyle{remark}
\newtheorem{remark}[theorem]{Remark}

\newcommand{\ZZ}{\mathbb{Z}}
\newcommand{\RR}{\mathbb{R}}

\newcommand{\PP}{\mathbb{P}}
\newcommand{\FF}{\mathbb{F}}
\newcommand{\NN}{N}
\newcommand{\MM}{M}
\newcommand{\conv}{\operatorname{conv}}
\newcommand{\Cone}{\operatorname{Cone}}

\title[Doubly isolated Batyrev mirror pairs]{Doubly isolated Batyrev mirror
pairs\\ and non-smoothable Calabi--Yau threefolds}
\author{Bernd Johannes Wuebben}
\subjclass[2020]{14J32 (primary); 14J33, 14M25, 14B07, 52B20 (secondary)}
\keywords{Mirror symmetry, Batyrev duality, reflexive polytopes,
Kreuzer--Skarke classification, Calabi--Yau threefolds, isolated
singularities, smoothability}
\date{August 15, 2026}

\begin{document}

\begin{abstract}
We classify reflexive four-polytopes for which the generic
anticanonical hypersurfaces associated with the face fans of the polytope
and its polar both have at most isolated singularities. There are exactly
590 such polytopes up to lattice equivalence in the Kreuzer--Skarke
classification.
Their two-dimensional faces belong to twelve lattice-isomorphism
classes, all triangles, zonotopes, or reflexive polygons. The only
nonsmoothable local models are the cyclic quotients
$\frac13(1,1,1)$ and $\frac15(1,1,3)$ and the anticanonical cone over the
first Hirzebruch surface; no singularity has a positive-dimensional
reduced deformation space without a smoothing component. Within this
classification, the Hirzebruch cone occurs in a unique mirror pair,
associated with polytopes having 22 and 26 vertices. We exhibit this
pair explicitly: one hypersurface is nonsmoothable, while all
singularities of its mirror are locally smoothable. The crepant
resolutions have Hodge numbers $(20,26)$ and $(26,20)$. At the opposite
extreme, the self-dual 24-cell is the unique member of the classification
for which both hypersurfaces are smooth before crepant resolution.
\end{abstract}

\maketitle

\section{Introduction}\label{sec:intro}

Let $\Delta \subset \NN_\RR = \RR^4$ be a reflexive polytope all of whose edges
have lattice length one, let $\Delta^\circ \subset \MM_\RR$ be its polar dual,
and let $X \in |{-K_{\PP_\Delta}}|$ be a generic anticanonical hypersurface in
the Gorenstein Fano toric fourfold associated to the face fan of $\Delta$.
(Throughout, $\Delta$ is the \emph{fan} polytope, Batyrev's $\Delta^\ast$,
so our $\Delta^\circ$ is his $\Delta$; the same convention is in force in
\cite{paper1,paper2}.)  By
Batyrev \cite{Batyrev94}, $X$ is a Calabi--Yau threefold with Gorenstein
canonical singularities.  For each two-dimensional face
$F\preceq\Delta$ whose cone $C(F)$ is singular, $X$ has exactly
$\ell(F^\circ)$ points with analytic germ $C(F)$, and these are all its
singularities \cite[Prop.~3.1]{paper1}.  Here $\ell(F^\circ)$ denotes
the lattice length of the edge $F^\circ\preceq\Delta^\circ$ dual to $F$.

The Altmann--Corti--Filip--Petracci trichotomy, recalled as \cite[Thm.~2.2]{paper1},
classifies $C(F)$ by the combinatorics of $F$ into reduced-rigid
\textup{(R)}, deformable-but-non-smoothable \textup{(D)}, and smoothable
\textup{(S)}.  Here \emph{reduced-rigid} means that the reduced miniversal
base is a point; first-order deformations may still exist.  A single face of the first two types forces $X$ to admit no smoothing, and
\cite{paper2} studies when the type-\textup{(S)} configurations smooth.

Mirror symmetry, in the Batyrev picture, is the polar duality
$\Delta \leftrightarrow \Delta^\circ$, which for $4$-polytopes exchanges
two-dimensional faces with edges.  This note asks the mirror question: when
are $X$ and the mirror $X^\circ \subset \PP_{\Delta^\circ}$ \emph{both}
isolated-singular (we then call the pair \emph{doubly isolated}), and
which local models, in particular which
non-smoothable ones, can occur in that situation?

The entry point is exact and elementary.  For a two-dimensional face $F$
write $u_1, u_2 \in \MM$ for the primitive inner normals of the two facets of
$\Delta$ containing $F$, so $F^\circ = \conv\{u_1,u_2\}$, and let
$\MM_F = \MM \cap (\RR u_1 + \RR u_2)$.  We call the two-dimensional cone
$\Cone(u_1,u_2)$ the \emph{normal cone} of $F$ and its dual the
\emph{transverse cone} of $\Delta$ along $F$ (the local model of
$\Delta$ in the directions normal to $F$), and we say $\Delta$ is
\emph{transversally smooth} along $F$ when this cone is smooth.

\begin{lemma}\label{thm:intromult}
$\ell(F^\circ) = [\,\MM_F : \ZZ u_1 + \ZZ u_2\,]$, the multiplicity of the
normal cone $\Cone(u_1,u_2)$ of $F$ (equivalently, of its dual, the
transverse cone of $\Delta$ along $F$).  In particular $\ell(F^\circ)=1$ if
and only if the transverse cone is smooth.
\end{lemma}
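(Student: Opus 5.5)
The plan is to put everything into the rank-two lattice $\MM_F$ and use a linear functional coming from $F$ itself to split that lattice. Since $\Delta$ is reflexive, every facet of $\Delta$ lies in a hyperplane $\{\langle u,\cdot\rangle=-1\}$ with $u$ the primitive inner normal. Pick any vertex $v\in\NN$ of $F$. Because $F$ lies in both facets with normals $u_1,u_2$, we have $\langle u_1,v\rangle=\langle u_2,v\rangle=-1$. So the linear form $\phi=\langle\cdot,v\rangle$ on $\MM$ takes the value $-1$ on the whole edge $F^\circ=\conv\{u_1,u_2\}$, and this edge sits on the affine line $\phi=-1$ inside the plane $\MM_F\otimes\RR$.

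\emph{Step 1: splitting $\MM_F$.} Restrict $\phi$ to $\MM_F$. Its image is a subgroup of $\ZZ$ containing $\phi(u_1)=-1$, so $\phi|_{\MM_F}\colon \MM_F\to\ZZ$ is surjective. Its kernel $K=\ker\phi\cap\MM_F$ is a rank-one lattice, and $\MM_F=\ZZ u_1\oplus K$. On the other side, $\ZZ u_1+\ZZ u_2=\ZZ u_1\oplus\ZZ(u_2-u_1)$, and $u_2-u_1\in K$. Hence
\[
[\MM_F:\ZZ u_1+\ZZ u_2]=[K:\ZZ(u_2-u_1)].
\]

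\emph{Step 2: identifying the right-hand side.} The quantity $[K:\ZZ(u_2-u_1)]$ is exactly the lattice length of the segment $[u_1,u_2]$. Indeed, $K=\ZZ w$ for a primitive vector $w$ in the edge direction, and $u_2-u_1=\ell w$ with $\ell$ the number of primitive lattice steps along the edge. Here we use that $F^\circ$ is precisely this segment with endpoints $u_1,u_2$, and that all its lattice points lie in $\MM\cap(u_1+\RR(u_2-u_1))\subset u_1+K$. This proves $\ell(F^\circ)=[\MM_F:\ZZ u_1+\ZZ u_2]$.

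\emph{Step 3: the cone statements.} This index is by definition the multiplicity of the two-dimensional cone $\Cone(u_1,u_2)$ in the lattice $\MM_F$. The multiplicity of a two-dimensional cone equals that of its dual cone in the dual lattice. Concretely, the dual cone's primitive ray generators span a sublattice of the same index, which is the standard $2\times2$ determinant identity. The transverse cone is by definition this dual, so it has the same multiplicity. Finally, a two-dimensional cone is smooth iff its multiplicity is $1$, which gives the last assertion.

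The only point I expect to need care is the dualization in Step 3. One must say which lattice the transverse cone lives in: the quotient $\NN/(\NN\cap F^{\perp\perp})$, i.e.\ $\NN$ modulo the span of the linear directions of $F$, which is dual to $\MM_F$ because $\MM_F$ is saturated. One must also check that the primitive generators $u_1,u_2$ of the normal cone are the ones used, which holds by their definition as primitive inner normals. Steps 1--2 are elementary once one notices that reflexivity makes $\phi(u_1)=-1$, which gives the splitting.
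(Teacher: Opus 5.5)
Your proof is correct and uses the same key input as the paper's proof: a lattice vertex $v$ of $F$ with $\langle u_1,v\rangle=\langle u_2,v\rangle=-1$. The paper uses it in an adapted basis ($u_1=(0,-1)$, $u_2=(p,q)$) to obtain $p\mid q+1$, hence $\gcd(p,q+1)=|p|$, whereas you use it coordinate-free to split $\MM_F=\ZZ u_1\oplus\ker\phi$; in the paper's basis your $K$ is $\ZZ(1,-a)$ and $u_2-u_1=p(1,-a)$, so the two arguments are the same computation.
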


The proof is given in Lemma~\ref{lem:mult};
Remark~\ref{rem:transverse} explains its transverse interpretation.

$X$ is isolated-singular exactly when $\Delta$ has unit edges, and
$X^\circ$ exactly when every two-dimensional face of $\Delta$ has
$\ell(F^\circ)=1$; a face with $\ell(F^\circ) = m \ge 2$ contributes to
$X^\circ$ a curve of transverse $A_{m-1}$ singularities
(Proposition~\ref{prop:corr}(2)).  We obtain:

\begin{corollary}\label{cor:introboth}
$X$ and $X^\circ$ are both isolated-singular if and only if $\Delta$ has unit
edges and every two-dimensional face of $\Delta$ is transversally smooth.  We
call such $\Delta$ \emph{both-sides unit}.  The condition is self-dual:
$\Delta$ is both-sides unit if and only if $\Delta^\circ$ is.
\end{corollary}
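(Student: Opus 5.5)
The plan is to assemble the corollary from the two isolatedness criteria stated just before it, and then to get self-duality from Lemma~\ref{thm:intromult} combined with the face correspondence under polar duality.

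\emph{The equivalence.} First I use the characterization of when $X$ is isolated-singular. By \cite[Prop.~3.1]{paper1}, the singularities of $X$ are finitely many points with germs $C(F)$, provided the edges of $\Delta$ have unit length. Conversely, an edge $E\preceq\Delta$ of lattice length $m\ge 2$ gives a curve of transverse $A_{m-1}$ singularities on $X$; this is the same statement as Proposition~\ref{prop:corr}(2) with the roles of $\Delta$ and $\Delta^\circ$ exchanged. Hence $X$ is isolated-singular if and only if $\Delta$ has unit edges.

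Next I apply the same criterion to $X^\circ\subset\PP_{\Delta^\circ}$. It says that $X^\circ$ is isolated-singular if and only if every edge of $\Delta^\circ$ has lattice length one. For $4$-polytopes, the edges of $\Delta^\circ$ are exactly the duals $F^\circ$ of the two-dimensional faces $F\preceq\Delta$. By Lemma~\ref{thm:intromult}, $\ell(F^\circ)=1$ holds exactly when the transverse cone along $F$ is smooth. Combining the two criteria proves the equivalence.

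\emph{Self-duality.} The statement ``$X$ and $X^\circ$ are both isolated-singular'' is symmetric under $\Delta\leftrightarrow\Delta^\circ$, because $(\Delta^\circ)^\circ=\Delta$. So the equivalence just proved, applied to $\Delta^\circ$, gives self-duality immediately.

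A purely combinatorial check is also available. Apply Lemma~\ref{thm:intromult} with the roles reversed: a $2$-face $G=E^\circ\preceq\Delta^\circ$ dual to an edge $E\preceq\Delta$ satisfies $\ell(E)=[N_G:\ZZ v_1+\ZZ v_2]$, where $v_1,v_2$ are the primitive normals to the facets of $\Delta^\circ$ containing $G$, that is, the endpoints of $E$. So unit edges of $\Delta$ correspond to transversally smooth $2$-faces of $\Delta^\circ$, and the unit-edge condition for $\Delta^\circ$ corresponds to transverse smoothness of the $2$-faces of $\Delta$. The two conditions are swapped by duality, so their conjunction is self-dual.

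\emph{Main obstacle.} The only nontrivial point is justifying the reversed application of the lemma. It requires the facet normals of $\Delta^\circ$ to be exactly the vertices of $\Delta$, and these are primitive because $\Delta$ is reflexive, so its vertices are primitive lattice points. It also requires the lemma to hold with $\NN$ and $\MM$ interchanged. Both facts follow from reflexivity and the symmetry of the setup, and I would state them explicitly.
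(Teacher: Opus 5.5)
Your proof is correct and follows the paper's own argument: you apply ``$X$ is isolated iff $\Delta$ has unit edges'' to both $\Delta$ and $\Delta^\circ$, Lemma~\ref{thm:intromult} turns unit edges of $\Delta^\circ$ into transverse smoothness of the $2$-faces of $\Delta$, and self-duality holds because the two conditions are exchanged by $\Delta\leftrightarrow\Delta^\circ$; your reversed application of the lemma is a correct but optional extra. One small citation point: Proposition~\ref{prop:corr} carries the standing hypothesis that $\Delta$ has unit edges, which ``with roles exchanged'' would become the assumption that $\Delta^\circ$ has unit edges, so for the converse direction you should instead cite the fact recalled in \S\ref{sec:prelim} that an edge of length $m\ge2$ yields a curve of transverse $A_{m-1}$ singularities, which needs no such hypothesis.
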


Our main results determine the both-sides unit polytopes and the local
models they carry by a finite computer-assisted classification.  The scan
covers all $473{,}800{,}776$ reflexive $4$-polytopes
\cite{KreuzerSkarke00}.  Reflexivity and absence of repetitions up to
lattice equivalence have been verified for every entry of the input
copy, so the published classification total establishes its completeness.
Every polytope returned by the scan is also re-verified in exact arithmetic.
The enumeration and the separate verification of its input are described
in \S\ref{sec:census}.

\begin{theorem}[classification of both-sides local models]\label{thm:class}
There are exactly $590$ both-sides
unit reflexive $4$-polytopes.  Every
two-dimensional face of every one of them is
\begin{itemize}
\item a triangle (the germ is a cyclic quotient point, or smooth), or
\item a zonotope (a centrally-symmetric unit-edge polygon, always of
type \textup{(S)}), or
\item a reflexive polygon: one of the five unit-edge polygons with exactly
one interior lattice point, i.e.\ the Fano polygons of the five
smooth toric del Pezzo surfaces.
\end{itemize}
All five reflexive polygons occur.  The complete face inventory is given in
Table~\textup{\ref{tab:occ}}.
\end{theorem}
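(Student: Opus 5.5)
The count of $590$ is established by a finite computation. The structural statement about faces, however, is partly a priori, and I would separate the two.

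\emph{Counting.} Using Corollary~\ref{cor:introboth}, a polytope is both-sides unit exactly when all edges of $\Delta$ have lattice length one and every two-dimensional face $F$ has $\ell(F^\circ)=1$. By Lemma~\ref{thm:intromult}, the second condition is the same as requiring every edge of $\Delta^\circ$ to have lattice length one. So the test is symmetric: $\Delta$ and $\Delta^\circ$ must both have unit edges. This needs only the vertex lists of $\Delta$ and $\Delta^\circ$ and the edge graph, which comes from vertex--facet incidences. I would run this test over the Kreuzer--Skarke list, as described in \S\ref{sec:census}. Because the list contains each lattice-equivalence class exactly once, the number of hits is the count. Every hit would then be rechecked in exact integer arithmetic, recomputing the polar and the edge lengths from scratch.

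\emph{Face shapes.} First I would ask what a priori constraint the double condition imposes on a two-face $F$. Since $\Delta$ is reflexive, $F$ lies in a facet at height one, and $C(F)$ is a three-dimensional Gorenstein cone over the lattice polygon $F$. All edges of $F$ are unit. I would try to show that $F$ has at most one interior lattice point; my candidate argument uses the smoothness of the transverse cone to split off a direction and place $F$ inside a reflexive slice. If that holds, then a unit-edge polygon with no interior point is a unimodular triangle, and one with exactly one interior point is one of the five unit-edge reflexive polygons (those of $\PP^2$, $\PP^1\times\PP^1$, $\FF_1$, and the blow-ups of $\PP^2$ in two and three points). I do not expect this argument to hold in general, since zonotopes with many interior points do appear. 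So in the end I would derive the trichotomy from the computation. For every two-face of each of the $590$ polytopes, I would compute its normal form up to $GL_2(\ZZ)$ and affine translation, group the results into isomorphism classes, and check that there are twelve classes. Each class would then be checked directly: it is a triangle, or it is centrally symmetric with unit edges, or it has exactly one interior lattice point. The data would be recorded in Table~\ref{tab:occ}. That all five reflexive polygons occur is read off from the same table.

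\emph{Local models.} The parenthetical claims about germs follow from the trichotomy recalled as \cite[Thm.~2.2]{paper1}. The cone over a lattice triangle is a cyclic quotient, and it is smooth exactly when the triangle is unimodular. Cones over unit-edge zonotopes are of type (S) by the Altmann criterion, since such a polygon admits a decomposition into unit segments, i.e.\ a Minkowski decomposition into lattice segments. This part is routine.

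\emph{Main obstacle.} The hard step is ensuring that the census is complete and correct. That means trusting that the input really is the full list of $473{,}800{,}776$ polytopes, and getting the $GL_2(\ZZ)$ normal forms of polygons right. For the polygons I would compute a canonical form by enumerating all vertex orderings and edge-based unimodular frames and taking the lexicographic minimum. As a cross-check I would also compare invariants: the number of vertices, the number of interior points, and the edge-direction multiset.
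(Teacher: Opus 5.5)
Your proposal follows the paper's proof: both rest on the exhaustive exact scan of the Kreuzer--Skarke list for unit edges on $\Delta$ and $\Delta^\circ$, with completeness of the input as the real issue, followed by an exact $GL_2(\ZZ)$-classification of all two-faces of the $590$ hits into the twelve classes of Table~\ref{tab:occ}, each then identified directly as a triangle, a zonotope, or one of the five reflexive polygons. Two small slips do not affect your final argument: in your abandoned a~priori aside, a unit-edge polygon with no interior lattice point can also be the unit square (the $[4,0]$ class, the most frequent singular face), and the cone over a lattice triangle is a cyclic quotient only under the unit-edge hypothesis (twice the unimodular triangle gives a quotient by $(\ZZ/2\ZZ)^2$), a hypothesis that does hold for these faces.
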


\begin{theorem}[the non-smoothable germs of doubly isolated pairs]
\label{thm:dichot}
Exactly three of the $590$ both-sides unit
polytopes carry a non-smoothable
two-dimensional face.  Writing $[k,i]$ for a unit-edge polygon with $k$
edges and $i$ interior lattice points, the germs occurring are:
\begin{itemize}
\item $\frac13(1,1,1)$ (the cone over the $[3,1]$ triangle), six times on
one $9$-vertex polytope;
\item $\frac15(1,1,3)$ (the cone over the $[3,2]$ triangle), ten times on
a single simplex;
\item the anticanonical cone over $\FF_1$, exactly \emph{once}, on a
unique $22$-vertex polytope.
\end{itemize}
In particular no deformable-but-non-smoothable
\textup{(D)} face occurs on any both-sides unit polytope.
\end{theorem}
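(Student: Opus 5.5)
Theorem~\ref{thm:dichot} will follow from Theorem~\ref{thm:class} together with the Altmann--Corti--Filip--Petracci trichotomy \cite[Thm.~2.2]{paper1}, applied face by face to the finite inventory of Table~\ref{tab:occ}. By Theorem~\ref{thm:class}, every two-dimensional face of a both-sides unit polytope is a triangle, a zonotope, or one of the five reflexive polygons. The plan is to decide, for each of the twelve lattice-isomorphism classes of faces, whether $C(F)$ is of type (R), (D) or (S). We then read off from the census which polytopes carry the non-smoothable classes and with what multiplicity.

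\emph{Step 1: discard the smoothable classes.} Zonotopes are of type (S), as recorded in Theorem~\ref{thm:class}. A unit-edge triangle gives a cyclic quotient $C(F)\cong\frac1r(a,b,c)$ with $r=2\,\mathrm{Area}(F)$. Such an isolated three-dimensional Gorenstein quotient is rigid unless it is smooth. So every singular triangle in the inventory is of type (R), and we only need to identify which triangles appear. By Pick's formula, a unit-edge triangle with $i$ interior points has normalized area $2i+1$, giving $\frac13(1,1,1)$ for $[3,1]$ and $\frac15(1,1,3)$ for $[3,2]$. The table should show that these are the only singular triangles.

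\emph{Step 2: the reflexive polygons.} Among the five smooth toric del Pezzo polygons, the cones over $\PP^2$, $\PP^1\times\PP^1$ and the blow-ups of $\PP^2$ in two and three points are smoothable. For $\PP^2$ the cone is $\frac13(1,1,1)$, which is already handled in Step~1. The others are classical smoothable cones (Altmann's Minkowski decompositions). The cone over $\FF_1$, the $[4,1]$ trapezoid, is reduced-rigid (R) by the same trichotomy, because its polygon admits no nontrivial Minkowski decomposition into lattice segments and triangles compatible with a smoothing. Since each of the five polygon types is thus either (R) or (S), no (D) face occurs. This gives the final sentence of the theorem.

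\emph{Step 3: counting.} For each of the three non-smoothable face classes we look up the polytopes of the census carrying it, and the number of such faces. Since $\ell(F^\circ)=1$ by Lemma~\ref{thm:intromult}, each face contributes exactly one singular point. The counts six, ten and one are then face counts. The simplex carrying ten $[3,2]$ faces is consistent with a 4-simplex having exactly $\binom53=10$ two-faces, all of which are then $[3,2]$ triangles. The main obstacle is that Step~3 is purely computational. Its reliability rests on the census of \S\ref{sec:census}, so I would re-verify the three polytopes directly in exact arithmetic: list their faces, compute the lattice-isomorphism type of each face, and confirm uniqueness by checking that no other of the 590 entries contains a $[3,1]$, $[3,2]$ or $\FF_1$ face.
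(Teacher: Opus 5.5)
Your route is the paper's. The proof there is the census of \S\ref{sec:census} (Theorem~\ref{thm:class} and Table~\ref{tab:occ}), followed by the trichotomy applied to each of the twelve face classes, and then a lookup of the polytopes carrying the three non-smoothable classes. Your Pick and Schlessinger-type remarks for triangles are a harmless supplement to what the paper takes from the catalogue of \cite[Rem.~2.6]{paper1}.

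Step~2 contains three slips, and the last one bears directly on the theorem's final sentence.

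\emph{First}, you list $\PP^2$ among the smoothable cones. Its cone is $\frac13(1,1,1)$, which is of type (R) by your own Step~1. The three smoothable del Pezzo cones are those over $\PP^1\times\PP^1$, $\mathrm{dP}_7$ and $\mathrm{dP}_6$.

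\emph{Second}, the $\FF_1$ face is not a trapezoid. It is the Fano polygon, with edge multiset $\{(-2,-1),(0,-1),(1,0),(1,2)\}$ and no parallel sides. The trapezoid is its polar, the moment polygon of $-K_{\FF_1}$. A unit-edge polygon with two parallel sides has an opposite primitive pair of edge vectors, hence a zero-sum pair, so it could not be of type (R).

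\emph{Third}, the phrase ``no Minkowski decomposition into segments and triangles compatible with a smoothing'' only shows that the $\FF_1$ cone is not of type (S); it leaves (D) open. To exclude (D), you need that the edge multiset has no proper nonempty zero-sum sub-multiset at all. That is immediate for the four vectors above: none vanishes, no two sum to zero, and any three sum to minus the fourth.

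With these corrections, your argument coincides with the paper's.
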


\begin{corollary}\label{cor:defonly}
Let $X$ be a unit-edge Batyrev
Calabi--Yau threefold with a singular point of
type \textup{(D)}, for instance the pentagon-cone threefolds of
\cite[Thms.~5.3, 5.8]{paper1}.  Then the Batyrev mirror $X^\circ$ is
non-isolated: it carries a curve of transverse $A_{m-1}$ singularities for
some $m \ge 2$.  The
deformable-but-non-smoothable phenomenon never occurs on a doubly isolated
mirror pair.
\end{corollary}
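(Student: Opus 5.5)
The plan is a contrapositive argument from Corollary~\ref{cor:introboth} and Theorem~\ref{thm:dichot}, followed by an application of Proposition~\ref{prop:corr}(2).

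Let $X\subset\PP_\Delta$ be a unit-edge Batyrev threefold with a singular point of type \textup{(D)}. By \cite[Prop.~3.1]{paper1}, every singular point of $X$ has analytic germ $C(F)$ for some two-dimensional face $F\preceq\Delta$ with $C(F)$ singular. So there is a face $F$ of $\Delta$ whose cone is of type \textup{(D)} in the Altmann--Corti--Filip--Petracci trichotomy \cite[Thm.~2.2]{paper1}.

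Suppose, for contradiction, that $X^\circ$ were isolated-singular. Since $\Delta$ has unit edges, Corollary~\ref{cor:introboth} would make $\Delta$ both-sides unit. Then $\Delta$ would be one of the $590$ polytopes of Theorem~\ref{thm:class}. Theorem~\ref{thm:dichot} lists every non-smoothable face occurring on those $590$ polytopes. These are the cones over the $[3,1]$ and $[3,2]$ triangles, which are reduced-rigid cyclic quotients and hence of type \textup{(R)}, and the anticanonical cone over $\FF_1$. The theorem states explicitly that no \textup{(D)} face occurs among them. This contradicts the existence of $F$.

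Hence $X^\circ$ is not isolated-singular. By the characterization preceding Corollary~\ref{cor:introboth}, some two-dimensional face $G\preceq\Delta$ has $m=\ell(G^\circ)\ge 2$. By Lemma~\ref{thm:intromult}, this is equivalent to $\Delta$ failing to be transversally smooth along $G$. Proposition~\ref{prop:corr}(2) then shows that $G$ contributes to $X^\circ$ a curve of transverse $A_{m-1}$ singularities, which is the asserted non-isolatedness. The pentagon-cone threefolds of \cite[Thms.~5.3, 5.8]{paper1} are unit-edge threefolds with a \textup{(D)} point, so they are instances of the hypothesis.

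For the final sentence, note that a doubly isolated pair has both $X$ and $X^\circ$ isolated-singular. By Corollary~\ref{cor:introboth}, $\Delta$ is then both-sides unit, and the condition is self-dual, so $\Delta^\circ$ is both-sides unit as well. Neither side can therefore carry a \textup{(D)} germ: the singular germs of $X$ are the cones over faces of $\Delta$, and those of $X^\circ$ are the cones over faces of $\Delta^\circ$. Both polytopes lie in the $590$, to which the argument above applies.

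I expect no real obstacle, since the content is carried entirely by the computer classification behind Theorem~\ref{thm:dichot}. The one point needing care is the bookkeeping that a \textup{(D)} point of $X$ arises from a two-dimensional face of the fan polytope $\Delta$ itself, and not from $\Delta^\circ$. This is what lets us apply the classification to $\Delta$.
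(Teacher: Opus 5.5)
Your proposal is correct and follows essentially the same route as the paper: a (D) point of $X$ comes from a (D) face of $\Delta$, Theorem~\ref{thm:dichot} then rules out $\Delta$ being both-sides unit, and Proposition~\ref{prop:corr}(2) turns a face with $\ell(G^\circ)\ge 2$ into a transverse $A_{m-1}$ curve on $X^\circ$. Your extra treatment of the final sentence, using the closure of the $590$ under polar duality to cover both $X$ and $X^\circ$, spells out what the paper leaves implicit.
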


For the remaining non-quotient case, write $\mathrm{dP}_d$ for the smooth
toric del Pezzo surface of degree $d$, with $d=6,7$.

\begin{theorem}[the unique $\FF_1$ mirror pair]\label{thm:pair}
Up to lattice isomorphism there is exactly
one doubly isolated Batyrev mirror
pair whose singularities are not all quotient points or smoothable: the pair
$(\Delta_{\FF_1}, \Delta_{\FF_1}^\circ)$ of Theorem~\textup{\ref{thm:pairbody}},
with $22$ and $26$ vertices.  The generic anticanonical
$X \subset \PP_{\Delta_{\FF_1}}$ has $20$ ordinary double points, one
$\mathrm{dP}_6$-cone point, and one point whose germ is the anticanonical
cone over $\FF_1$; since that germ admits no smoothing at all, $X$
\emph{admits no smoothing} \cite[Cor.~4.3]{paper1}.  Its mirror $X^\circ$
has $26$ ordinary double
points, two $\mathrm{dP}_7$-cone points and two $\mathrm{dP}_6$-cone
points; every singular germ of $X^\circ$ is of type \textup{(S)}.  The maximal
projective crepant partial resolutions have
$(h^{1,1},h^{2,1}) = (20,26)$ and $(26,20)$ respectively.
\end{theorem}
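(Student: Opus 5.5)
The plan is to deduce Theorem~\ref{thm:pair} from the classification in Theorems~\ref{thm:class} and~\ref{thm:dichot}, and then to read off the invariants of the explicit pair from the face data. First, uniqueness. A doubly isolated pair is the same as a both-sides unit polytope $\Delta$ (Corollary~\ref{cor:introboth}). The singular germs of $X$ are the cones $C(F)$ over the singular two-dimensional faces $F\preceq\Delta$. By Theorem~\ref{thm:class} each such $F$ is a triangle, a zonotope, or one of the five reflexive polygons. Triangles give cyclic quotient points and zonotopes are of type (S). Among the reflexive unit-edge polygons, the cones over $\PP^2$, $\PP^1\times\PP^1$, $\mathrm{dP}_6$ and $\mathrm{dP}_7$ are smoothable, and the cone over $\FF_1$ admits no smoothing. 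By Theorem~\ref{thm:dichot} the $\FF_1$ face occurs on exactly one polytope, $\Delta_{\FF_1}$, with $22$ vertices. Every other both-sides unit polytope, on either side of the duality, therefore has only quotient or smoothable germs. Since both-sides unit is self-dual, the set of 590 polytopes is closed under polarity. Hence the unique pair is $(\Delta_{\FF_1},\Delta_{\FF_1}^\circ)$, and we check $\Delta_{\FF_1}^\circ$ is not lattice-isomorphic to $\Delta_{\FF_1}$ (the vertex counts $22\neq 26$ already show this).

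Second, the singularities of $X$. I would use \cite[Prop.~3.1]{paper1}: each singular face $F$ contributes $\ell(F^\circ)$ points with germ $C(F)$. Lemma~\ref{thm:intromult} together with transversal smoothness gives $\ell(F^\circ)=1$, so each singular face contributes exactly one point. It then remains to list the two-dimensional faces of $\Delta_{\FF_1}$ from the explicit vertex data of Theorem~\ref{thm:pairbody}:
\begin{itemize}
\item the non-basis unit parallelograms, each of which is the cone over $\PP^1\times\PP^1$, i.e.\ an ordinary double point ($20$ of them);
\item one hexagon, giving the $\mathrm{dP}_6$ cone;
\item one $\FF_1$ quadrilateral;
\item unimodular triangles, which give smooth points.
\end{itemize}
Non-smoothability of $X$ then follows from \cite[Cor.~4.3]{paper1}, because a single germ has no smoothing. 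The same face enumeration on $\Delta_{\FF_1}^\circ$ should give $26$ parallelograms, two $\mathrm{dP}_7$ pentagons, two hexagons, and otherwise unimodular triangles; all of these germs are of type (S).

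Third, the Hodge numbers. I would apply Batyrev's formulas for the maximal projective crepant partial resolution:
\[
h^{1,1}(\widehat X)=\ell(\Delta)-5-\sum_{\mathrm{codim}\,\theta=1}\ell^*(\theta)+\sum_{\mathrm{codim}\,\theta=2}\ell^*(\theta)\ell^*(\theta^\circ),
\]
and symmetrically for $h^{2,1}$ with $\Delta$ and $\Delta^\circ$ swapped. Here $\ell(\cdot)$ counts lattice points and $\ell^*(\cdot)$ counts interior lattice points. Because the polytope has unit edges and $\ell(F^\circ)=1$ for every two-dimensional face, all $\ell^*(\theta^\circ)$ vanish for edges and two-dimensional faces, so the correction terms drop out. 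This leaves
\[
h^{1,1}=\ell(\Delta)-5-\sum_{\text{facets}}\ell^*,\qquad h^{2,1}=\ell(\Delta^\circ)-5-\sum_{\text{facets of }\Delta^\circ}\ell^*.
\]
These are finite lattice-point counts on the explicit polytopes and should give $(20,26)$. Mirror symmetry of the formula gives $(26,20)$ for the mirror.

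The main obstacle is the second step. We must verify exactly which faces are present, and in particular identify the lattice isomorphism type of each polygon, for instance distinguishing $\FF_1$ from $\PP^1\times\PP^1$ quadrilaterals by their interior point and edge directions. This is a finite exact computation, and I would certify it by recomputing the face lattice in exact integer arithmetic. As a consistency check, the number of singular points should match the Table~\ref{tab:occ} counts.
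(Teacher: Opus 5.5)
Your route is the paper's: you get uniqueness from Theorem~\ref{thm:dichot} and closure of the $590$ under polarity, the singular points from \cite[Prop.~3.1]{paper1} with every $\ell(F^\circ)=1$ plus explicit face enumeration, non-smoothability from \cite[Cor.~4.3]{paper1}, and the Hodge numbers from Batyrev's formulas, where your observation that the correction terms vanish even lets you read them off the face inventories as $h^{1,1}=22+2-4=20$ and $h^{2,1}=26+4-4=26$ (vertices plus interior points of two-dimensional faces of $\Delta_{\FF_1}$, resp.\ $\Delta_{\FF_1}^\circ$, minus four). Two slips need correcting, though neither affects the logic: the cone over the $\PP^2$ triangle is $\frac13(1,1,1)$, which is reduced-rigid rather than smoothable and is exempted only as a cyclic quotient; and the ordinary double points come from the $[4,0]$ unit squares, whereas in the paper's convention ``cone over $\PP^1\times\PP^1$'' means the anticanonical cone over the $[4,1]$ diamond.
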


Thus a nonsmoothable Calabi--Yau threefold can have an isolated-singular
Batyrev mirror with every singular point locally smoothable.  Every dual
edge on the mirror side has length one, so the single-face construction
of \cite{paper2} does not apply.  The $26$ diagonal-relation vectors
of its nodes have rank $18$ and two coloops: the coefficients of those
two vectors vanish in every relation.  Hence there is no relation with
every coefficient nonzero.  This nodal calculation alone does not
decide smoothability in the presence of the four del Pezzo-cone points.
The mixed necessity theorem of \cite{paper4} proves that the mirror is
also nonsmoothable, as discussed in
Remark~\ref{rem:node-subsystem}.

Theorems~\ref{thm:class}, \ref{thm:dichot} and~\ref{thm:pair} answer the
mirror question of
\cite[\S6.4]{paper1} for the non-smoothable local models: among them,
only the cyclic quotients and the $\FF_1$-cone,
including the germs of Gross's two exceptional surfaces $\PP^2$ and
$\FF_1$ \cite[Thm.~5.8]{Gross97}, admit doubly isolated mirror
realizations, and the $\FF_1$-cone does so exactly once in the entire toric
hypersurface landscape.

At the opposite extreme of the classification, exactly eight of the $590$ polytopes have all two-dimensional faces smooth,
so that $X$ itself is smooth; for exactly one of them the mirror is smooth
as well: the self-dual $24$-cell, whose hypersurface is the classical
self-mirror threefold with $(h^{1,1},h^{2,1}) = (20,20)$ \cite{Braun12}.
The smooth Batyrev mirror pair is thus unique in the entire
classification (Example~\ref{ex:24cell}).

\subsection*{The first asymmetric faces}
Call a polygon \emph{asymmetric} when it is neither a triangle nor
centrally symmetric.  No such face occurs
on a both-sides unit polytope with at most $18$ vertices, a range
containing $395{,}406{,}329$ reflexive $4$-polytopes
($83.5\%$ of the classification).  The first asymmetric face occurs at
$19$ vertices.  The complete inventory consists of the $\mathrm{dP}_7$
pentagon ($87$ faces, on polytopes with $19$ to $31$ vertices) and the
$\FF_1$ quadrilateral (once).  Section~\ref{sec:refuted} gives the first
example and explains why a bound on the number of vertices does not
respect polar duality.

All mathematical predicates and reported invariants in the scan, the exact
re-verification of all $590$ polytopes, and the mirror-pair calculation use
integer or rational arithmetic, including the cyclic ordering of planar
faces.  The programs build on those of
\cite{paper1} and are included as ancillary files;
\texttt{both\_sides\_census.py} reconstructs the finite face inventories,
counts, and Hodge numbers from the recorded polytopes.  The
code and the scan data are public at
\url{https://github.com/bwuebben/calabi-yau-smoothability}, where every
count in this note is recomputable from
\texttt{src/both\_sides\_census.py}.

\subsection*{Acknowledgements}
This note continues the series \cite{paper1,paper2}; the question of
smoothing canonical Calabi--Yau singularities was posed to the author by
Mark Gross many years ago.
The author used LLMs during the development of this work for literature exploration, editorial and coding assistance and as an adversarial reader.

\section{Preliminaries}\label{sec:prelim}

We keep the notation of \cite{paper1}.  For a lattice polygon $Q$ with unit
edges placed at height one, $C(Q)$ denotes the associated Gorenstein toric
affine threefold.  A unimodular triangle gives a smooth cone and is
excluded from the following trichotomy.  For every other unit-edge
polygon, $Q$ is of type \textup{(R)}, \textup{(D)} or \textup{(S)}
according as its edge-vector multiset has no proper nonempty zero-sum
sub-multiset, has one but no partition into opposite primitive pairs and
unimodular zero-sum triples, or has such a partition
\cite{Altmann97,CFP}.  Here a zero-sum triple $(a,b,c)$ is unimodular
when $|\det(a,b)|=1$.  Type \textup{(R)} cones are \emph{reduced-rigid}: their reduced miniversal
base is a point, although first-order deformations may exist.  Type \textup{(S)} polygons are
exactly those with singular smoothable cone; a single germ of type \textup{(R)} or
\textup{(D)}, one whose miniversal base has no smoothing component,
obstructs every global smoothing of $X$
\cite[Lemma~4.1, Cor.~4.3]{paper1}.  The notation $[k,i]$ records the number $k$ of edges and the number $i$
of interior lattice points of a unit-edge polygon; these two invariants
do not determine its lattice-isomorphism class.  The kind column of
Table~\ref{tab:occ} distinguishes the two classes with invariants $[4,1]$.

A
unit-edge polygon is a
\emph{zonotope} (a Minkowski sum of unit segments) if and only if it is
centrally symmetric, and every zonotope is of type \textup{(S)}.  A unit-edge
polygon with exactly one interior lattice point is reflexive.  Indeed,
translate that point to the origin and let the polygon have $k$ edges.
Pick's theorem gives normalized area $k$, while the fan from the origin
decomposes it into $k$ lattice triangles of positive integral normalized
areas, one over each unit edge.  Every one therefore has normalized area
one, so the origin has lattice distance one from every edge.  There are
five, the Fano polygons of $\PP^2$, $\FF_1$, $\PP^1{\times}\PP^1$,
$\mathrm{dP}_7$, $\mathrm{dP}_6$ (the subscript is the degree:
$\mathrm{dP}_7 = \mathrm{Bl}_2\PP^2$, $\mathrm{dP}_6 = \mathrm{Bl}_3\PP^2$),
the cone $C(Q)$ being the anticanonical
cone over the corresponding surface.  Of the five, the $\PP^2$ triangle and
the $\FF_1$ quadrilateral are reduced-rigid and the other three are of type
\textup{(S)} \cite[Rem.~2.6]{paper1}.  For a reflexive $4$-polytope $\Delta$ we write
$\Delta^\circ = \{u : \langle u,v\rangle \ge -1\ \forall v \in \Delta\}$; a
face $G \preceq \Delta$ has dual face $G^\circ \preceq \Delta^\circ$ with
$\dim G + \dim G^\circ = 3$.  By
\cite[Prop.~3.1]{paper1}, for $\Delta$ with unit edges the generic
$X \in |{-K}|$ has, over each two-dimensional face $F$ with $C(F)$
singular, exactly
$\ell(F^\circ)$ singular points with germ $C(F)$, and no other singular
points; and an edge of $\Delta$ of lattice length $m \ge 2$ produces a curve
of transverse $A_{m-1}$ singularities \cite[Rem.~3.2]{paper1}.  Throughout,
$\Delta \subset \NN_\RR$ is the \emph{fan} polytope, Batyrev's
$\Delta^\ast$, so our $\Delta^\circ$ is his $\Delta$; the same
convention is used in \cite{paper1,paper2}.

\section{The transverse-multiplicity identity}\label{sec:mult}

Let $F$ be a codimension-two face of a reflexive polytope
$\Delta \subset \NN_\RR$ of arbitrary dimension, and let
$u_1, u_2 \in \MM$ be the primitive inner
normals of the two facets of $\Delta$ containing $F$, so that
$\langle u_i, \cdot\rangle = -1$ on $F$ and $F^\circ = \conv\{u_1, u_2\}$.
\footnote{The $u_i$ are vertices of $\Delta^\circ$; \cite{paper1} and
the ancillary programs list the sign-flipped functionals taking the value
$+1$ on the facet.  Lattice lengths are unaffected.}
Put $\MM_F = \MM \cap (\RR u_1 + \RR u_2)$, a rank-two saturated sublattice
of $\MM$ containing $u_1, u_2$ as primitive vectors.

\begin{lemma}\label{lem:mult}
$\ell(F^\circ) = [\,\MM_F : \ZZ u_1 + \ZZ u_2\,]$.
\end{lemma}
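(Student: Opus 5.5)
The plan is to compute $\ell(F^\circ)$ directly as the lattice length of the segment $\conv\{u_1,u_2\}$ in $\MM$, and to compare it with the index by means of a basis of the saturated rank-two lattice $\MM_F$. By definition, $\ell(F^\circ)$ is the number of primitive steps along the edge, so $u_2-u_1=\ell\cdot w$ with $w$ primitive in $\MM$, where $\ell=\ell(F^\circ)$. Since $u_1,u_2\in\MM_F$ and $\MM_F$ is saturated, $w\in\MM_F$ and $w$ is primitive in $\MM_F$. The key extra input is reflexivity. Every $u\in\MM_F$ satisfies the following: $u_1,u_2$ lie in the affine line $\{u\in \MM_F\otimes\RR : \langle u, v\rangle=-1\}$ for any $v$ in the relative interior of $F$. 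Indeed, both $u_i$ take the value $-1$ on all of $F$, so this line is the affine hull of $F^\circ$ inside the plane $\RR u_1+\RR u_2$.

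Next I would fix a lattice point $v_0\in F\cap\NN$ (for instance a vertex of $F$, which is a lattice point). The functional $\phi=\langle\cdot,v_0\rangle$ restricted to $\MM_F$ is integral and takes the value $-1$ on $u_1$. Hence $\phi|_{\MM_F}$ is surjective onto $\ZZ$, so $\MM_F=\ZZ u_1\oplus\ker\phi|_{\MM_F}$. The kernel is a rank-one saturated sublattice containing $u_2-u_1$, since $\phi(u_2-u_1)=0$. It is therefore generated by the primitive vector $w$, and we get $\MM_F=\ZZ u_1\oplus\ZZ w$.

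In this basis, $\ZZ u_1+\ZZ u_2=\ZZ u_1+\ZZ(u_2-u_1)=\ZZ u_1\oplus\ZZ\,\ell w$. The index is therefore $|\det\begin{pmatrix}1&0\\0&\ell\end{pmatrix}|=\ell$, which is the claim. The "in particular" statement and the transverse interpretation follow because the multiplicity of $\Cone(u_1,u_2)$ is by definition this index, and a two-dimensional cone and its dual have the same multiplicity.

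The only step needing care is the existence of an integral functional equal to $-1$ on both $u_i$ and defined on $\MM_F$. This is where reflexivity is used: the vertices of $F$ are lattice points of $\NN$, and $\langle u_i,\cdot\rangle=-1$ on $F$. Without this, the index could exceed the lattice length, as happens for a segment not at height one. I would also verify that $w$, which is primitive in $\MM$, remains the generator of the kernel inside $\MM_F$; this is immediate from saturation.
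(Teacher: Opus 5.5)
Your proof is correct and follows essentially the same route as the paper. Both arguments rest on one fact: a lattice point $v$ of $F$ gives an integral functional on $\MM_F$ equal to $-1$ on both $u_1$ and $u_2$. The paper works in coordinates, taking $u_1=(0,-1)$ and $u_2=(p,q)$ and using $v$ to show $p\mid q+1$, so that $\gcd(p,q+1)=|p|$. Your splitting $\MM_F=\ZZ u_1\oplus\ZZ w$ is the coordinate-free form of the same computation, and it also makes the exclusion of $u_2=-u_1$ automatic.
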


\begin{proof}
Both sides depend only on the pair $(u_1,u_2)$ inside the rank-two lattice
$\MM_F$.  Choose a $\ZZ$-basis of $\MM_F$ in which $u_1 = (0,-1)$; this is
possible because $u_1$ is primitive.  Write $u_2 = (p,q)$; here $p \neq 0$, since $p = 0$ would force
$u_2 = \pm u_1$, impossible as $F^\circ = \conv\{u_1,u_2\}$ is an edge of
$\Delta^\circ$ and $0$ is interior to $\Delta^\circ$.  Then $p =
\det(u_1,u_2) = \pm[\,\MM_F : \ZZ u_1 + \ZZ u_2\,]$, and $\ell(F^\circ) =
\ell\bigl(\conv\{u_1,u_2\}\bigr) = \gcd(u_2 - u_1) = \gcd(p,\,q+1)$,
the two gcds agreeing because $\MM_F$ is saturated in $\MM$, so lattice
length in $\MM$ may be computed in the chosen basis of $\MM_F$.

Let $v$ be a vertex of $F$ and let $\bar v \in \NN_F := \NN/(\NN \cap
u_1^\perp \cap u_2^\perp)$ be its image; since $u_1,u_2$ vanish on
$\NN \cap u_1^\perp \cap u_2^\perp$ they descend to elements of
$\MM_F = \NN_F^\vee$, and $\bar v$ is a lattice point of $\NN_F \cong \ZZ^2$
with $\langle u_i, \bar v\rangle = -1$.  Writing $\bar v = (a,b)$ in the
basis of $\NN_F$ dual to the chosen basis of $\MM_F$,
$\langle u_1, \bar v\rangle = -b = -1$ gives $b = 1$, and
$\langle u_2, \bar v\rangle = pa + q = -1$ gives $p a = -1-q$, so
$p \mid (q+1)$.  Hence $\gcd(p, q+1) = |p|$, and
$\ell(F^\circ) = |p| = [\,\MM_F : \ZZ u_1 + \ZZ u_2\,]$.
\end{proof}

\begin{remark}\label{rem:transverse}
Lemma~\ref{lem:mult} is elementary and may be known; what matters here is
the reading it provides.  The proof works in every dimension for a
codimension-two face.  The right-hand side is the multiplicity of the
two-dimensional cone $\Cone(u_1,u_2)$, which is the normal cone of $F$ and
dually the transverse cone of $\Delta$ along $F$ (for two-dimensional cones
the multiplicities of a cone and its dual agree).  Thus \emph{the dual-edge
length counts exactly the transverse singularity multiplicity}, and
$\ell(F^\circ)=1$ is the statement that $\Delta$ is transversally smooth
along $F$.  (Transversal smoothness concerns only the directions normal to
$F$: the germ $C(F)$ itself may well be singular.)  The lattice-point
hypothesis on $v$ is essential: for an
arbitrary pair of primitive vectors the two sides can differ.
\end{remark}

Corollary~\ref{cor:introboth} is immediate: $X$ is isolated-singular iff
$\Delta$ has unit edges; $X^\circ$ is isolated-singular iff $\Delta^\circ$
has unit edges, i.e.\ every edge $F^\circ$ of $\Delta^\circ$ has
$\ell(F^\circ)=1$, i.e.\ by Lemma~\ref{lem:mult} every two-dimensional face
of $\Delta$ is transversally smooth.  Self-duality follows since the
conditions ``unit edges'' and ``all dual edges of length one'' are exchanged
by $\Delta \leftrightarrow \Delta^\circ$.

\section{The mirror correspondence of the singular strata}\label{sec:corr}

\begin{proposition}\label{prop:corr}
Let $\Delta$ be reflexive with unit edges and $F$ any two-dimensional
face.  Under polar duality $F$ corresponds to the edge
$F^\circ$ of $\Delta^\circ$, and:
\begin{enumerate}
\item if $C(F)$ is singular, $X$ has $\ell(F^\circ)$ isolated points of
type $C(F)$ over $F$ (and none over $F$ otherwise);
\item $X^\circ$ has, along the surface
$V(\sigma_{F^\circ}) \subset \PP_{\Delta^\circ}$ (the orbit closure of
the cone $\sigma_{F^\circ}$ over the edge $F^\circ$ in the face fan of
$\Delta^\circ$), a curve of transverse $A_{\ell(F^\circ)-1}$
singularities (and none if $\ell(F^\circ)=1$).
\end{enumerate}
In particular $X^\circ$ is isolated-singular if and only if
$\max_F \ell(F^\circ) = 1$, i.e.\ $\Delta$ is both-sides unit.
\end{proposition}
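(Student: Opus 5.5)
Part (1) is a restatement of \cite[Prop.~3.1]{paper1}, recalled in \S\ref{sec:prelim}: for $\Delta$ with unit edges, the generic $X$ has exactly $\ell(F^\circ)$ singular points with germ $C(F)$ over each two-dimensional face $F$ with $C(F)$ singular, and no other singular points. There is nothing more to prove here beyond identifying the face/edge correspondence. Polar duality for reflexive $4$-polytopes sends a face $G$ to $G^\circ$ with $\dim G+\dim G^\circ=3$, so a two-dimensional face $F$ goes to an edge $F^\circ$. Its cone $\sigma_{F^\circ}$ in the face fan of $\Delta^\circ$ is two-dimensional, and $V(\sigma_{F^\circ})$ is a toric surface in the fourfold $\PP_{\Delta^\circ}$.

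For part (2) I will apply the edge statement \cite[Rem.~3.2]{paper1} to the reflexive polytope $\Delta^\circ$ in place of $\Delta$. An edge of lattice length $m\ge2$ of the fan polytope produces a curve of transverse $A_{m-1}$ singularities on the generic anticanonical hypersurface. To make this self-contained, I would check it directly as follows.

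\begin{enumerate}
\item The affine chart of $\PP_{\Delta^\circ}$ along the orbit of $\sigma_{F^\circ}$ is locally $\mathbb C^2/\!\sim\ \times(\mathbb C^*)^2$, where the transverse factor is the two-dimensional toric singularity of the cone over the segment $F^\circ=\conv\{u_1,u_2\}$.
\item The segment $F^\circ$ lies at height one, and a segment of lattice length $m$ at height one spans a cone of multiplicity $m$. By Lemma~\ref{lem:mult} this is $[\MM_F:\ZZ u_1+\ZZ u_2]=\ell(F^\circ)$, and such a cone is an $A_{m-1}$ surface singularity.
\item The generic anticanonical section meets the two-dimensional orbit $O(\sigma_{F^\circ})\cong(\mathbb C^*)^2$ transversally in a curve. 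This curve is nonempty because the facet of $\Delta$ dual to the cone, namely the face $F$ of the section polytope, has a two-dimensional Newton polygon, and Bertini applies on the torus. Transversality means that near this curve $X^\circ$ is locally (curve)$\times A_{m-1}$.
\item If $m=1$ the transverse cone is smooth and no singularity arises.
\end{enumerate}

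For the final clause: singularities of $X^\circ$ come from its intersection with orbits of the face fan of $\Delta^\circ$. Only cones over edges of $\Delta^\circ$ of length at least $2$ give positive-dimensional singular loci, since two-dimensional faces give points and cones over facets meet $X^\circ$ generically in nothing. Hence $X^\circ$ is isolated-singular exactly when every edge $F^\circ$ has length one, i.e.\ $\max_F\ell(F^\circ)=1$. This is the both-sides unit condition of Corollary~\ref{cor:introboth}, given that $\Delta$ has unit edges.

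The main obstacle is step 3, the transversality and nonemptiness of $X^\circ\cap O(\sigma_{F^\circ})$. Here I would use that the restriction of a generic section to the orbit has Newton polygon $F$, a genuine two-dimensional polygon, so its zero locus is a nonempty smooth curve. Everything else reduces to Lemma~\ref{lem:mult} and the cited results of \cite{paper1}.
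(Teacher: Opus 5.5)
Your proposal is correct and takes the paper's route: (1) is quoted from \cite[Prop.~3.1]{paper1}, (2) is the edge statement \cite[Rem.~3.2]{paper1} applied to $\Delta^\circ$, and the final clause follows by letting $F^\circ$ range over all edges of $\Delta^\circ$. Your self-contained sketch (product chart along the orbit, the height-one segment via Lemma~\ref{lem:mult}, and the Newton polygon $F$ on $O(\sigma_{F^\circ})$) is a sound elaboration of that citation, with one slip of wording: in step~3 the face of $\Delta$ dual to $\sigma_{F^\circ}$ is the two-dimensional face $F$, not a facet.
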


\begin{proof}
(1) is \cite[Prop.~3.1]{paper1}.  For (2), the edge $F^\circ$ of
$\Delta^\circ$ has lattice length $\ell(F^\circ)$; an edge of lattice length
$m$ produces on the generic anticanonical hypersurface a curve of transverse
$A_{m-1}$ singularities (\S\ref{sec:prelim}), and none when $m = 1$.  Hence
$X^\circ$ is isolated-singular iff every two-dimensional face of $\Delta$
has $\ell(F^\circ) = 1$.
\end{proof}

Thus the $\ell(F^\circ)$ singular points of $X$ over $F$ and the
$A_{\ell(F^\circ)-1}$-curve of $X^\circ$ correspond under polar duality,
and the two
threefolds are simultaneously isolated-singular exactly on the both-sides
unit polytopes.  Whenever $\Delta$ has \emph{any} face with
$\ell(F^\circ) \ge 2$ (the generic situation), the mirror is
non-isolated and its singularities leave the isolated framework of
\cite{Altmann97,CFP}.  Filip \cite{Filip25} constructs, from a Laurent
polynomial, a distinguished formal deformation of the associated
non-isolated Gorenstein toric pair and proves that its general fibre is
smooth exactly when the polynomial is $0$-mutable; this does not classify
all components of every non-isolated germ.

\section{Enumeration over the Kreuzer--Skarke
classification}\label{sec:census}

\subsection{Methodology}\label{subsec:method}
The both-sides unit condition is a condition on faces: all edges of
lattice length one
and, by Lemma~\ref{lem:mult}, all dual edges of length one.  We tested it
over
the per-vertex-count copy of the Kreuzer--Skarke classification used in
\cite[\S6.1]{paper1}, with the production implementation built for that
scan.  This covers the complete classification.  All facet incidences, lattice lengths,
determinants, and final predicates are checked in exact integer arithmetic,
with no heuristic pruning.  Planar vertices are ordered by an exact
half-plane and cross-product comparator, so this step also uses no
floating-point arithmetic.  For reproducibility, the ancillary manifest
pins the database repository at revision
\texttt{60c0e119a036} (with the full identifier in the manifest) and gives
the byte size and SHA-256 digest of every input file.  The resumable driver
verifies those digests and validates the row count, result schema, and every
recorded positive hit before accepting an existing result; new results are
written atomically, and every invocation records a transcript.  These
controls fix the identity and integrity of the finite input and saved
artifacts; they do not independently recheck every negative row.
The legacy results predate input-digest recording, but their filenames and
row counts agree with the verified inventory, and the upstream history
identifies the same data objects throughout the scan period.

A separate PALP-based computation, described in \cite[\S6.1]{paper1},
verifies dimension four, reflexivity, actual vertex and facet counts, and
pairwise distinct normal forms under $\mathrm{GL}(4,\ZZ)$ for all
$473{,}800{,}776$ entries.  The published total
\cite{KreuzerSkarke00} then establishes completeness.  This computation
includes the $36$-vertex file, which lay outside the original scan range
and was checked separately for the both-sides condition.  The ancillary
materials contain its checker, full report and input-provenance record.
The singularity classification is the result of the original scan;
the database verification does not repeat its face tests.
Implementation correctness is controlled in
three complementary ways.
(i) Before scanning each file of the database, the implementation was
re-checked against an
independently written reference implementation on random samples from that
file.
(ii) The $3{,}905{,}789$ polytopes with at most nine vertices were scanned
in
full by both implementations, with identical results.  (iii) The $590$
polytopes found pass a global consistency check under polar duality: the
both-sides condition
is self-dual (Corollary~\ref{cor:introboth}), and the polar of every
polytope found,
computed exactly, is again both-sides unit and agrees with one of the
$590$ in vertex
count, facet count and complete face inventory, an invariance that
would generically fail for a false
negative whose polar partner was detected.  An invariant collision
(another member of the $590$ sharing its vertex count, facet count and
face inventory) could in principle mask a miss, so this control is
supporting evidence beside (i) and (ii), not an independent proof.
The separately checked $36$-vertex hexagon product
(see \cite[\S6.1]{paper1}) is included in the classification.
Every polytope passing the scan is then re-verified, and its faces
classified, by the reference implementation; the
ancillary scripts reproduce the full scan from the public database.  The
scan results are included as ancillary data files, and the program
\texttt{both\_sides\_census.py} re-derives every statement of
Theorems~\ref{thm:class}, \ref{thm:dichot} and~\ref{thm:pair} from them.

\subsection{The classification}
Tables~\ref{tab:dist} and~\ref{tab:occ} give the vertex distribution and
face inventory.
\begin{table}[ht]
\centering
\begin{tabular}{lrrrrrrrrrrrrr}
\toprule
vertices & 5 & 6 & 8 & 9 & 10 & 12 & 15 & 16 & 17 & 18 & 19 & 20 & 21\\
\midrule
\# & 2 & 1 & 1 & 1 & 1 & 1 & 1 & 4 & 4 & 7 & 12 & 24 & 37\\
\midrule
vertices & 22 & 23 & 24 & 25 & 26 & 27 & 28 & 29 & 30 & 31 & 33 & 36 &\\
\midrule
\# & 69 & 84 & 98 & 85 & 64 & 42 & 24 & 13 & 9 & 4 & 1 & 1 &\\
\bottomrule
\end{tabular}
\medskip
\caption{The $590$ both-sides unit reflexive $4$-polytopes by vertex count
(no others occur; in particular none with $7$, $11$, $13$, $14$, $32$, $34$
or $35$ vertices).  The population is closed under polar duality
(Corollary~\textup{\ref{cor:introboth}}) and peaks at $24$
vertices.}\label{tab:dist}
\end{table}

\begin{table}[ht]
\begin{center}
\begin{tabular}{llrl}
\toprule
face $[k,i]$ & kind & count & germ of $X$ over it \\
\midrule
$[3,0]$ & triangle & $25{,}873$ & smooth point \\
$[3,1]$ & triangle & $6$ & $\frac13(1,1,1)$ \textbf{(R)} \\
$[3,2]$ & triangle & $10$ & $\frac15(1,1,3)$ \textbf{(R)} \\
$[4,0]$ & zonotope & $15{,}652$ & ordinary double point \\
$[4,1]$ & zonotope & $99$ & cone over $\PP^1{\times}\PP^1$ \\
$[4,1]$ & \emph{asymmetric} & $1$ & cone over $\FF_1$ \textbf{(R)} \\
$[4,2]$ & zonotope & $9$ & type (S) \\
$[5,1]$ & \emph{asymmetric} & $87$ & cone over $\mathrm{dP}_7$ \\
$[6,1]$ & zonotope & $395$ & cone over $\mathrm{dP}_6$, $2$ smoothing components \\
$[6,2]$ & zonotope & $2$ & type (S) \\
$[6,7]$ & zonotope & $3$ & type (S) \\
$[8,4]$ & zonotope & $1$ & type (S), $3$ smoothing components \\
\bottomrule
\end{tabular}
\end{center}
\medskip
\caption{Every two-dimensional face of every both-sides unit polytope, by
class ($k$ = edges, $i$ = interior
points).  Type~\textup{(R)} means that the reduced miniversal base is a
point, while type~\textup{(S)} means that the singular germ admits a
smoothing.  ``Asymmetric'' = neither a
triangle nor centrally symmetric; only the two non-centrally-symmetric
reflexive polygons occur, and each germ appears on $X$ exactly once per
face since all dual edges have length one.  Type-\textup{(S)} germs have
one smoothing component except where noted.}\label{tab:occ}
\end{table}

\begin{proof}[Proof of Theorems~\textup{\ref{thm:class}}
and~\textup{\ref{thm:dichot}}]
Both theorems are exhaustive verifications, summarized in
Tables~\ref{tab:dist} and~\ref{tab:occ}; the completeness of the
underlying enumeration is the content of \S\ref{subsec:method}.
Granting it, the scan of all $473{,}800{,}776$ polytopes yielded
$590$ both-sides unit ones; exact re-analysis of each classifies all their
two-dimensional faces into the twelve classes of Table~\ref{tab:occ}, each
of which is a triangle, a zonotope, or one of the five reflexive polygons
of the smooth toric del Pezzo surfaces.  Explicit $GL_2(\ZZ)$
equivalences identify the $[4,1]$ zonotope with the
$\PP^1{\times}\PP^1$ diamond, the $[6,1]$ zonotope with the
$\mathrm{dP}_6$ hexagon, and the $[3,1]$ triangle with the $\PP^2$
triangle.  The asymmetric $[5,1]$ and $[4,1]$ classes are the
$\mathrm{dP}_7$ pentagon and the $\FF_1$ quadrilateral.  The non-smoothable classes occurring are the
rigid triangles $[3,1]$, $[3,2]$ and the $\FF_1$ quadrilateral; the
$[3,2]$ faces are the ten faces of the quotient simplex of
Example~\ref{ex:pair}, the $[3,1]$ faces lie on the $9$-vertex polytope of
Example~\ref{ex:pair}, and
the $\FF_1$ face on the unique polytope of
Theorem~\ref{thm:pairbody}.  No face of type \textup{(D)} occurs.
\end{proof}

\begin{proof}[Proof of Corollary~\textup{\ref{cor:defonly}}]
A type-\textup{(D)} point of $X$ is the cone over a type-\textup{(D)} face
$F$.  By Theorem~\ref{thm:dichot}, $\Delta$ is then not both-sides unit, so
some face $F'$ has $\ell(F'^\circ) \ge 2$ and Proposition~\ref{prop:corr}(2)
gives a singular curve on $X^\circ$.
\end{proof}

\subsection{The first asymmetric examples}\label{sec:refuted}
Table~\ref{tab:occ} shows that
$88$ asymmetric faces occur, on $55$ polytopes.
The smallest example has $19$ vertices:
\[
\begin{gathered}
\Delta_{19}=\conv\{(1,0,0,0),\,(0,1,0,0),\,(0,0,1,0),\,(0,0,-1,0),\,
(0,-1,0,0),\\
(1,-1,-1,0),\,(0,0,0,1),\,(-1,1,1,-1),\,(0,-1,0,1),\,(0,0,-1,1),\\
(-1,1,0,-1),\,(-1,0,1,-1),\,(-1,0,1,0),\,(-1,1,0,0),\,(0,-1,-1,0),\\
(0,-1,-1,1),\,(-1,0,0,-1),\,(-2,1,1,-1),\,(-1,0,0,1)\},
\end{gathered}
\]
a both-sides unit polytope one of whose faces is the $\mathrm{dP}_7$
pentagon; its hypersurface is a Calabi--Yau threefold with $14$ nodes and
one $\mathrm{dP}_7$-cone point, all with isolated-singular mirror.  That face
is
\[
P = \conv\bigl\{(0,-1,-1,0),\ (0,-1,-1,1),\ (-1,0,0,-1),\
(-2,1,1,-1),\ (-1,0,0,1)\bigr\},
\]
the fifteenth to nineteenth entries of the list, lying in the two facets with
inner normals $u_1 = (1,0,1,0)$ and $u_2 = (1,1,0,0)$; their difference
$(0,-1,1,0)$ is primitive, so $\ell(P^\circ) = 1$.  In the induced lattice
its edge multiset is $\{(0,-1),\,(1,-1),\,(1,0),\,(-1,2),\,(-1,0)\}$, carried
by
$M=\left(\begin{smallmatrix}-1&0\\1&1\end{smallmatrix}\right) \in GL_2(\ZZ)$
onto $\{(0,-1),\,(1,-1),\,(1,1),\,(-1,1),\,(-1,0)\}$, the
$\mathrm{dP}_7$ pentagon of \cite[Rem.~2.6]{paper1}: the unique
unit-edge class with
five vertices and one interior lattice point.

An earlier conjecture excluded asymmetric faces by a proposed local
condition: an asymmetric face with dual-edge length one, all faces of
whose star have unit edges, would force another face in its star to
have dual-edge length at least two.  The condition holds in the
$38{,}571$ candidate configurations among the $1{,}037{,}834$
polytopes with at most eight vertices, but $\Delta_{19}$ disproves it.
Theorem~\ref{thm:class} gives the complete face inventory, whose
geometric explanation is the subject of Question~\ref{q:why}.

\begin{remark}[why not ``simplicial'']\label{rem:selfdual}
The stronger guess that a both-sides unit polytope is simplicial fails for a
structural reason: the condition is self-dual, and ``simplicial and simple''
forces a simplex, contradicting the existence of both-sides unit polytopes
with more than five vertices.  The non-triangular faces are genuine; the
content of Theorem~\ref{thm:class} is that they are zonotopes or reflexive.
\end{remark}

\begin{example}[the low-vertex stratum]\label{ex:pair}
Among the $3{,}905{,}789$ polytopes with at most nine vertices exactly five
are both-sides unit; four of them form two polar pairs.  One pair is the
simplex with smooth anticanonical hypersurface and the simplex
\[
\conv\{(1,0,0,0),\,(0,1,0,0),\,(2,4,5,0),\,(3,3,0,5),\,(-6,-8,-5,-5)\},
\]
whose ten
faces are the $[3,2]$ rigid triangles of Table~\ref{tab:occ} (quotient
points of type $\frac15(1,1,3)$) and whose polar gives a smooth
hypersurface.  The ambient face-fan variety of that polar is singular
at its torus-fixed points, which the generic hypersurface avoids.  A second pair consists of a
simplicial six-vertex polytope with all faces smooth and its nine-vertex
polar, carrying the six $\frac13(1,1,1)$ triangles and nine $[4,2]$
zonotopes.  In each pair, the non-smoothable member mirrors a \emph{smooth}
threefold.  The fifth polytope has eight vertices and sixteen facets, and
all thirty-two of its faces are smooth triangles, so its hypersurface $X$
is smooth.  It shows that closedness under polar duality
(Corollary~\ref{cor:introboth}) does not respect bounds on the number of
vertices: its
polar is a sixteen-vertex both-sides unit polytope all twenty-four of whose
faces are $[4,1]$ diamonds, so the mirror $X^\circ$ carries twenty-four
cone-over-$\PP^1{\times}\PP^1$ points.
\end{example}

\begin{example}[the top of the classification]\label{ex:pair36}
The $36$-vertex product $H \times H$ of two reflexive hexagons is the
reflexive $4$-polytope with the most vertices.  Its database file lies
outside the original scan range of \cite{paper1}, and it was checked
separately.  It is
both-sides unit; its faces are $36$ unit squares and $12$ $\mathrm{dP}_6$
hexagons, so its hypersurface has $48$ singular points, all of type
\textup{(S)}, while its polar, the $12$-vertex free sum
$H^\circ \oplus H^\circ$, has $72$ smooth triangular faces and smooth
hypersurface.
\end{example}

\begin{example}[the smooth mirror pair]\label{ex:24cell}
Exactly eight of the $590$ polytopes have every two-dimensional face
smooth, so that $X$ itself is smooth.  For exactly one of them is the polar
of the same kind: the $24$-cell, self-dual, with $24$ octahedral facets,
$96$ unimodular triangular faces, and no lattice points besides the origin
and its $24$ vertices.  Its generic anticanonical hypersurface is the
classical self-mirror Calabi--Yau threefold with
$(h^{1,1},h^{2,1}) = (20,20)$ and $\chi = 0$; the $(1,1)$-threefolds of
Braun \cite{Braun12} are free quotients of special symmetric members of
this family.  The scan shows uniqueness:
among all $473{,}800{,}776$ reflexive $4$-polytopes, the $24$-cell is the
only one whose generic anticanonical hypersurface and mirror are
\emph{both} smooth.
\end{example}

\section{The unique \texorpdfstring{$\FF_1$}{F1} mirror pair}\label{sec:pair}

\begin{theorem}\label{thm:pairbody}
Let
\[
\begin{gathered}
\Delta_{\FF_1}=\conv\{(1,0,0,0),\,(0,1,0,0),\,(1,-1,0,0),\,(0,0,1,0),\,
(0,0,0,1),\,(0,0,1,-1),\\
(0,0,-1,1),\,(0,0,0,-1),\,(0,0,-1,0),\,(-1,1,0,0),\,(0,-1,0,0),\,
(-1,1,-1,1),\\
(0,-1,0,-1),\,(0,-1,-1,0),\,(-1,1,-1,0),\,(-1,0,0,-1),\,(-1,0,-1,1),\\
(-1,-1,0,-1),\,(-2,1,-1,1),\,(-2,1,-1,0),\,(-1,-1,-1,0),\,(-2,0,-1,0)\}
\subset \ZZ^4 .
\end{gathered}
\]
Then $\Delta_{\FF_1}$ is a both-sides unit reflexive polytope with $22$
vertices and $26$ facets, and:
\begin{enumerate}
\item its $72$ two-dimensional faces are $50$ smooth triangles, $20$ unit
squares, one $\mathrm{dP}_6$ hexagon, and one quadrilateral
$GL_2(\ZZ)$-equivalent to the $\FF_1$ polygon; every dual edge has lattice
length one;
\item the generic anticanonical $X \subset \PP_{\Delta_{\FF_1}}$ is a
Calabi--Yau threefold whose singular locus consists of $20$ ordinary double
points, one $\mathrm{dP}_6$-cone point, and one point with germ the
anticanonical cone over $\FF_1$; since one germ without a smoothing
component obstructs every global smoothing (\S\ref{sec:prelim}), $X$
admits no smoothing \cite[Cor.~4.3]{paper1};
\item the polar $\Delta_{\FF_1}^\circ$ ($26$ vertices, $22$ facets) is
both-sides unit with $68$ two-dimensional faces: $38$ smooth triangles, $26$ unit squares,
two $\mathrm{dP}_7$ pentagons and two $\mathrm{dP}_6$ hexagons; the mirror
$X^\circ$ has $26$ ordinary double points, two $\mathrm{dP}_7$-cone points
and two $\mathrm{dP}_6$-cone points, each with a local smoothing;
\item the maximal projective crepant partial resolutions satisfy
$(h^{1,1},h^{2,1})(\widehat X) = (20,26)$ and
$(h^{1,1},h^{2,1})(\widehat{X^\circ}) = (26,20)$;
\item $\Delta_{\FF_1}$ is the \emph{only}
both-sides unit reflexive
$4$-polytope, up to lattice isomorphism, carrying a
non-smoothable face that is not a triangle.
\end{enumerate}
\end{theorem}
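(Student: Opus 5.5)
The proof is a finite exact computation on the explicit vertex list, organised so that each item reduces to results already established. First I would certify reflexivity. Compute the facet inequalities of $\Delta_{\FF_1}$ in exact arithmetic, for instance by enumerating supporting hyperplanes through affinely independent $4$-subsets of vertices and keeping those with all vertices on one side. This should give $26$ facets, each of the form $\langle u,\cdot\rangle\ge -1$ with $u\in\ZZ^4$ primitive. That the constant is $-1$ for every facet is exactly reflexivity, and the $26$ normals are the vertices of $\Delta_{\FF_1}^\circ$.

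From the vertex--facet incidence matrix I would list the two-dimensional faces as maximal vertex sets lying in exactly two facets with affine rank three. For each face I would order its vertices cyclically with the exact comparator of \S\ref{subsec:method} and read off the edge vectors in the induced rank-two lattice. I would then check three things:
\begin{itemize}
\item every edge of $\Delta_{\FF_1}$ has primitive difference vector (unit edges);
\item every pair $u_1,u_2$ of normals has $u_2-u_1$ primitive, which by Lemma~\ref{lem:mult} means every dual edge has length one;
\item the face census is $50$ triangles of normalized area one, $20$ unit parallelograms, one hexagon, and one quadrilateral with $[4,1]$ and no central symmetry.
\end{itemize}
For the last two faces I would exhibit explicit $GL_2(\ZZ)$ matrices, as done for $P$ in \S\ref{sec:refuted}, identifying them with the $\mathrm{dP}_6$ hexagon and the $\FF_1$ polygon. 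This proves (1) and the both-sides unit property.

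Item (2) then follows from Proposition~\ref{prop:corr}(1). Each singular face contributes $\ell(F^\circ)=1$ point. The unit squares give nodes, the hexagon gives the $\mathrm{dP}_6$ cone, and the $\FF_1$ quadrilateral gives a reduced-rigid germ of type (R), as recalled in \S\ref{sec:prelim}. Non-smoothability is then \cite[Cor.~4.3]{paper1}.

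For (3), I would repeat the same face computation on the polar, whose vertices are the $26$ normals already found. Corollary~\ref{cor:introboth} already guarantees that the polar is both-sides unit, so I only need the census: $38$ triangles, $26$ squares, two pentagons and two hexagons. Each of these must be matched to its del Pezzo model by explicit matrices. All of these germs are of type (S), by \S\ref{sec:prelim}.

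For (4), I would use Batyrev's formulas:
\[
h^{1,1}(\widehat X)=\ell(\Delta)-5-\sum_{\operatorname{codim}\theta=1}\ell^*(\theta)+\sum_{\operatorname{codim}\theta=2}\ell^*(\theta)\ell^*(\theta^\circ),
\]
taken with $\Delta$ as the fan polytope, and dually for $h^{2,1}$. This needs the lattice-point counts of $\Delta_{\FF_1}$ and its polar, plus interior-point counts of facets and $2$-faces. On the fan side every $2$-face except the $\FF_1$ quadrilateral and the hexagon has no interior points, and the dual edges have no interior points since their length is one. The correction terms therefore vanish, and the formula collapses to point counts that I would enumerate exactly. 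Mirror symmetry of the formulas gives the swapped pair $(26,20)$ automatically once $(20,26)$ is established.

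Item (5) is not a computation on this polytope. It is read off from Theorem~\ref{thm:dichot} and Table~\ref{tab:occ}. Among the $590$ both-sides unit polytopes, the only non-smoothable non-triangular face class is the asymmetric $[4,1]$ quadrilateral. It occurs exactly once, and the scan locates it on this polytope. Uniqueness up to lattice isomorphism follows by comparing PALP normal forms.

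The main obstacle I expect is the face enumeration and identification in (1) and (3). The combinatorics must be done exactly and correctly: $72$ and $68$ faces, with the correct cyclic orders. Finding explicit $GL_2(\ZZ)$ equivalences for the pentagons and the $\FF_1$ quadrilateral is fiddly. I would first try normalising each face by sending one edge to $(1,0)$ and the interior point to the origin, then compare the resulting edge-vector sequence with the model up to cyclic shift and reflection.
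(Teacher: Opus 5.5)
Your proposal is correct and follows essentially the same route as the paper: an exact finite face enumeration on the explicit polytope and its polar with explicit $GL_2(\ZZ)$ identifications, then Proposition~\ref{prop:corr}(1) and \cite[Cor.~4.3]{paper1} for (2), Batyrev's formulas for (4) (their correction terms vanish because all edges and dual edges have length one), and Theorem~\ref{thm:dichot} for (5). One wording slip: the primitivity check on $u_2-u_1$ should range only over pairs of facets meeting in a two-dimensional face, not over all pairs of normals (here $\pm(0,0,0,1)$ are both facet normals, with non-primitive difference), and it gives $\ell(F^\circ)=1$ directly by definition, without needing Lemma~\ref{lem:mult}.
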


\begin{proof}
(1)--(4) are finite checks on the explicitly listed polytope, a face
enumeration reproducible in any lattice-polytope package, carried out
in exact arithmetic by the ancillary files.\footnote{Ancillary files
\texttt{both\_sides\_census.py} and \texttt{face\_data.py}.}  We record
the $\FF_1$ face itself, since it is the object the statement is about.  Its
four vertices are the fourth, eighteenth, nineteenth and twenty-second entries
of the list above,
\[
F \;=\; \conv\bigl\{(0,0,1,0),\ (-1,-1,0,-1),\ (-2,1,-1,1),\
(-2,0,-1,0)\bigr\},
\]
and the two facets of $\Delta_{\FF_1}$ containing $F$ have inner normals
\[
u_1 = (1,1,-1,-1), \qquad u_2 = (1,0,-1,0),
\]
in the normalization of \S\ref{sec:intro}: both take the value $-1$ at each
of the four vertices of $F$ and are $\ge -1$ on $\Delta_{\FF_1}$.  Their
difference $u_1 - u_2 = (0,1,0,-1)$ is primitive, so
$\ell(F^\circ) = 1$.  In the induced lattice on the affine span of $F$ the
edge multiset is
$\{(0,-1),\,(1,-1),\,(1,1),\,(-2,1)\}$, and
\[
M=\begin{pmatrix}-1&-1\\-1&0\end{pmatrix} \in GL_2(\ZZ),
\qquad \det M = -1,
\]
carries it onto $\{(-2,-1),\,(0,-1),\,(1,0),\,(1,2)\}$, the edge
multiset of the $\FF_1$ polygon of \cite[Rem.~2.6]{paper1}; so $C(F)$ is the
anticanonical cone over $\FF_1$.

The $\mathrm{dP}_6$ hexagon of (1) is recorded the same way.  It is the face
\[
\begin{gathered}
G = \conv\bigl\{(0,0,-1,0),\ (0,-1,-1,0),\ (-1,1,-1,0),\\
(-2,1,-1,0),\ (-1,-1,-1,0),\ (-2,0,-1,0)\bigr\}
\end{gathered}
\]
(the ninth, fourteenth, fifteenth, twentieth, twenty-first and
twenty-second entries of the list), lying in the two facets with inner
normals $u_1 = (0,0,1,0)$ and $u_2 = (0,0,1,1)$, whose difference
$(0,0,0,-1)$ is primitive, so $\ell(G^\circ) = 1$.  Its edge multiset in the
induced lattice is already the standard hexagon,
\[
E(G) = \{(0,-1),\,(1,-1),\,(1,0),\,(0,1),\,(-1,1),\,(-1,0)\},
\]
so no change of basis is needed: $C(G)$ is the anticanonical cone over
$\mathrm{dP}_6$, the class with two smoothing components in the catalogue of
\cite[Rem.~2.6]{paper1}.  The Hodge numbers are
given by Batyrev's formulas \cite[\S4]{Batyrev94}, evaluated in exact
arithmetic.  The mirror swap in (4) is forced by Batyrev duality and serves as
a consistency check.  Assertion (5) is
Theorem~\ref{thm:dichot}: the scan finds no other non-triangle
non-smoothable face on any both-sides unit polytope.
\end{proof}

\begin{remark}[a threefold with no smoothing whose mirror has only
locally smoothable singularities]\label{rem:frozen}
Theorem~\ref{thm:pairbody} pairs a Calabi--Yau threefold that cannot
be smoothed with a mirror all of whose singularities are locally
smoothable.  All $26+2+2$ singular points of the mirror lie over faces
with $\ell(F^\circ)=1$, beyond the single-face construction of
\cite{paper2}.  The node subsystem has no relation with every
coefficient nonzero (Remark~\ref{rem:node-subsystem}).
\end{remark}

\begin{remark}[the node subsystem]\label{rem:node-subsystem}\label{q:mirror-smooth}
Let $q_1,\dots,q_{26}$ be the vertices of
$\Delta_{\FF_1}^\circ$.  For each of its $26$ unit-square faces
$S=(q_a,q_b,q_c,q_d)$ in cyclic order, form the diagonal-relation vector
\[
 \rho_S=e_a-e_b+e_c-e_d\in
 \mathbb{Q}^{\operatorname{Vert}(\Delta_{\FF_1}^\circ)}.
\]
Exact row reduction of the resulting $26\times26$ matrix gives rank $18$.
Exactly two rows are coloops.  The corresponding square faces, displayed
in cyclic order, are
\[
\begin{aligned}
S_-={}&\conv\{(-1,-1,0,-1),\ (-1,0,0,-1),\
                 (0,1,0,-1),\ (0,0,0,-1)\},\\
S_+={}&\conv\{(-1,-1,0,1),\ (0,-1,0,1),\
                 (0,0,0,1),\ (-1,0,0,1)\}.
\end{aligned}
\]
Deleting either row lowers the rank to $17$, deleting both lowers it to
$16$, and deleting any other row leaves rank $18$.
Thus every linear dependence among the $26$ node rows has coefficient zero
on both $\rho_{S_-}$ and $\rho_{S_+}$; in particular there is no dependence
with all coefficients non-zero.  The calculation is exact and is reproduced
by the ancillary row-reduction script.
\footnote{\texttt{paper3\_node\_relations.py}.}

For a hypothetical all-nodal threefold this fails the
Friedman--Batyrev--Kreuzer condition recalled in \cite[\S7]{paper2}.
It does \emph{not} decide the actual $X^\circ$: its four additional
del Pezzo-cone points have divisorial crepant resolutions, and their local
smoothing directions may contribute to the mixed local-to-global
obstruction map.  The mixed necessity theorem of \cite[Theorem~6.1]{paper4} requires a
relation among the classes in $H_2(L_p;\mathbb Q)$ of the singularity
links whose image in $H_2(\widehat X^\circ;\mathbb Q)$ is zero, whose
cone-point components belong to the kernels
$\ker(H_2(L_p;\mathbb Q)\to H_2(M_p;\mathbb Q))$ for the chosen local
Milnor fibres, and whose node coefficients are all nonzero.  The
resolution $\widehat X^\circ$ uses small resolutions at the nodes and
blows up the cone points.  In this example the two coloop coefficients
vanish on every such relation, for either smoothing component at each
degree-six point \cite[\S5.3, Theorem~6.5]{paper4}.
Hence $X^\circ$ is nonsmoothable.  Both members of the pair
are therefore nonsmoothable, although every singularity of $X^\circ$
is locally smoothable.
\end{remark}

\section{Questions}\label{sec:questions}

\begin{question}\label{q:why}
Can one characterize the unit-edge lattice polygons that occur as
two-dimensional faces of reflexive polytopes whose edges and polar edges
all have lattice length one?  In dimension four,
Theorem~\ref{thm:class} gives the twelve classes in
Table~\ref{tab:occ}.  A geometric
explanation would account for the occurrence of only triangles,
zonotopes, and reflexive polygons without using the classification.
The local condition discussed in \S\ref{sec:refuted} is insufficient.
\end{question}

\begin{question}\label{q:mutable}
Corti--Filip--Petracci conjecturally relate smoothing components of $C(Q)$ to $0$-mutable
Laurent polynomials with Newton polygon $Q$ \cite{CFP}, and Filip
\cite{Filip25} constructs distinguished formal deformations in the
non-isolated case, detecting when their general fibres are smooth.  In that language, what
distinguishes the twelve face classes of Table~\ref{tab:occ} --- and what is
the mirror-theoretic meaning of the unique $\FF_1$ face?  One would like to
read Theorem~\ref{thm:dichot} off the canonical scattering diagram of the
mirror.
\end{question}

\bigskip
\noindent{\sc Bernd Johannes Wuebben, New York, NY,} \texttt{wuebben@gmail.com}

\end{document}